\tikzstyle{vertex}=[circle, draw, inner sep=0pt, minimum size=6pt]
\tikzstyle{fillvertex}=[circle, draw, inner sep=0pt, minimum size=6pt, fill=black]
\tikzstyle{bigvertex}=[circle, draw, inner sep=0pt, minimum size=10pt]
\newcolumntype{L}[1]{>{\raggedright\let\newline\\\arraybackslash\hspace{0pt}}m{#1}}
\newcolumntype{C}[1]{>{\centering\let\newline\\\arraybackslash\hspace{0pt}}m{#1}}
\newcolumntype{R}[1]{>{\raggedleft\let\newline\\\arraybackslash\hspace{0pt}}m{#1}}
\newtheorem{theorem}{Theorem}[section]
\newtheorem{lemma}[theorem]{Lemma}
\newtheorem{observation}[theorem]{Observation}
\theoremstyle{definition}
\newtheorem{definition}{Definition}[section]
\theoremstyle{remark}
\newcommand{\nrel}{\mathrm{NRel}}
\newcommand{\con}{\mathrm{C}}
\renewcommand{\Re}{\mathrm{Re}}
\renewcommand{\Im}{\mathrm{Im}}
\newcommand{\lexi}{\cdot}
\begin{document}
\title{On the roots of the node reliability polynomial}
\author{Jason Brown and Lucas Mol \\ Dalhousie University}
\date{}

\maketitle

\begin{abstract}
Given a graph $G$ whose edges are perfectly reliable and whose nodes each operate independently with probability $p\in[0,1],$ the \textit{node reliability} of $G$ is the probability that at least one node is operational and that the operational nodes can all communicate in the subgraph that they induce; it is the analogous node measure of robustness to the well studied \textit{all-terminal reliability}, where the nodes are perfectly reliable but the edges fail randomly. In sharp contrast to what is known about the roots of the all-terminal reliability polynomial, we show that the node reliability polynomial of any connected graph on at least three nodes has a nonreal polynomial root, the collection of real roots of all node reliability polynomials is unbounded, and the collection of complex roots of all node reliability polynomials is dense in the entire complex plane.\\

\noindent \textit{Keywords}: node reliability, all-terminal reliability, graph polynomial, polynomial root, limit of roots, closure of roots

\end{abstract}

\section{Introduction}

The most common measure of robustness of a network to random failures of components is {\em all-terminal reliability}.  For a (finite, undirected) graph $G$ where nodes are always operational and edges are independently operational with probability $p\in[0,1]$, the all-terminal reliability of $G$ is the probability that all of the nodes can communicate with one another (the probability, i.e., that at least a spanning tree is operational).  The all-terminal reliability of a graph is always a polynomial in $p$, and algorithms for calculating and efficiently estimating the function have been a major focus in the area (see, for example, \cite{ColbournBook} and \cite{BrownColbournSurvey}). 
Recent attention has been drawn to the roots of all-terminal reliability polynomials, where it is known that
\begin{itemize}
\item every graph has a subdivision whose all-terminal reliability has all real roots \cite{BrownColbournLogConcave}; and
\item the real roots of all-terminal reliability are bounded.  In fact, it was proven in \cite{BCConjecture} that they are contained in the set $\{0\} \cup (1,2].$
\end{itemize}
Moreover, it was conjectured in \cite{BCConjecture} that the roots of all-terminal reliability always lie in the disk $|z-1| \leq 1$, and while the conjecture is known to be false \cite{BCFalse}, the roots found outside of the disk are only outside of the disk by a slim margin.  It still seems likely that the roots of all-terminal reliability polynomials are bounded.

There is an analogous notion for robustness in a network with node rather than edge failures. Given a graph $G$ in which edges are always operational and nodes are independently operational with probability $p$, the \textit{node reliability} of $G,$ denoted $\nrel(G;p),$ is the probability that at least one node is operational and that all of the operational nodes can communicate with one another (the probability, i.e., that  the set of operational nodes is nonempty and induces a connected subgraph of $G$).  Node reliability has sometimes been called \textit{residual node connectedness reliability} in the literature.  As with the all-terminal reliability, the node reliability of a graph is always a polynomial in $p$. For example, the node reliability of the complete bipartite graph $K_{n,n}$ is given by 
\[ \nrel(K_{n,n};p) = 2np(1-p)^{2n-1} + (1-(1-p)^{n})^{2},\]
as either a single node is operational, or at least one node in each cell is operational. 

Work on node reliability has centered on complexity issues, polynomial time algorithms for restricted families of graphs, and on finding uniformly best graphs.  It has been shown that the problem of computing the sequence of coefficients of the node reliability polynomial is \#P-complete, even for the class of planar and bipartite graphs \cite{NodeRelComplexity2}.  On the other hand, polynomial time algorithms for computing the node reliabilities of certain restricted families of graphs, including trees and series-parallel graphs, have been found \cite{NodeRelComplexity1}.  Work on the existence and identification of uniformly best graphs in particular classes has been carried out in \cite{NodeRel1,NodeRel2,NodeRel3}.

In this paper, we show that while node reliability behaves quite similarly to all-terminal reliability in some ways, the roots of the node reliability polynomial could not be more different from the roots of the all-terminal reliability polynomial.  We show that
\begin{itemize}
\item for every $n \geq 3$, the node reliability polynomial of {\it any} connected graph of order $n$ (that is, on $n$ nodes) has a nonreal root;
\item the real roots of node reliability polynomials are unbounded; and 
\item the closure of the collection of roots of node reliability polynomials is the entire complex plane.
\end{itemize}

\section{Node reliability and connected set polynomials}

We will require some elementary background on node reliability. Observe that
\begin{align*} 
\nrel(G;p) &= \sum_{C \in \mathcal{C}}p^{|C|}(1-p)^{n-|C|},\label{noderelexpansion}
\end{align*}
where $\mathcal{C}$ is the collection of all \textit{connected sets} of $G$ (a \textit{connected set} of $G$ is a nonempty subset of nodes of $G$ that induces a connected subgraph of $G$).  We can also write
\begin{eqnarray}
\nrel(G;p) & = & \sum_{k=1}^nc_kp^k(1-p)^{n-k},\label{Cformnoderel}
\end{eqnarray}
where $c_k$ is the number of connected sets of order $k$ in $G$ for $k\in\{1,\hdots,n\}.$  The following straightforward observation, giving the exact values of certain coefficients of this form of node reliability, was made in \cite{Stivaros}.

\begin{observation}\label{ConnectedBasic}
Let $G$ be a connected graph of order $n$ and size $m$, let $\tau$ be the number of triangles of $G,$ and let $t$ be the number of cut nodes of $G.$ Then
\begin{enumerate}[\normalfont(i)]
\item $c_1=n,$
\item $c_2=m,$
\item $c_3=\left(\displaystyle\sum_{v\in V(G)}\tbinom{\mathrm{deg}(v)}{2}\right)-2\tau,$
\item $c_{n-1}=n-t,$ and
\item $c_n=1.$\hfill \qed
\end{enumerate}
\end{observation}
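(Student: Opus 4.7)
The plan is to verify each of the five formulas directly from the definition of a connected set. Parts (i), (ii), and (v) are essentially immediate: every single vertex induces a connected subgraph so $c_1=n$; a two-element subset $\{u,v\}$ induces a connected subgraph precisely when $uv\in E(G)$, giving $c_2=m$; and since $G$ is connected, $V(G)$ itself is the unique connected set of order $n$, giving $c_n=1$.

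For (iv), I would observe that $V(G)\setminus\{v\}$ induces a connected subgraph of $G$ if and only if $v$ is not a cut node of $G$. With $t$ cut nodes among the $n$ total, this leaves $n-t$ choices of $v$, so $c_{n-1}=n-t$.

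The most substantive step is (iii). I would classify each three-element connected set according to its induced subgraph, which must be either a path $P_3$ or a triangle $K_3$. Writing $p$ for the number of induced $P_3$'s in $G$, this gives $c_3=p+\tau$. To count $p$, I would interpret $\sum_{v\in V(G)}\tbinom{\mathrm{deg}(v)}{2}$ as the number of pairs $(v,\{x,y\})$ where $x,y$ are distinct neighbors of $v$; equivalently, the number of length-two walks of the form $x\text{--}v\text{--}y$ indexed by their midpoint $v$. Each induced $P_3$ has a unique midpoint and so contributes exactly one such pair, while each triangle contributes three (one for each of its vertices serving as midpoint). Therefore $\sum_v\tbinom{\mathrm{deg}(v)}{2}=p+3\tau$, and substituting into $c_3=p+\tau$ yields the claimed identity $c_3=\sum_v\tbinom{\mathrm{deg}(v)}{2}-2\tau$.

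No step presents a genuine obstacle; the only subtlety is the double-count in (iii), where one must carefully distinguish walks of length two (which overcount triangles by a factor of three) from induced copies of $P_3$. This is what accounts for the $-2\tau$ correction term.
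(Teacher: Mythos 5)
Your proof is correct in all five parts, and the counting in (iii) — where $\sum_v\binom{\deg(v)}{2}$ counts midpoint-rooted length-two paths, each induced $P_3$ contributes once, and each triangle contributes three times, yielding $c_3 = p + \tau = \bigl(\sum_v\binom{\deg(v)}{2} - 3\tau\bigr) + \tau$ — is exactly the right bookkeeping. The paper states this observation without proof (attributing it to Stivaros), so there is no in-text argument to compare against, but your reasoning is sound and is the standard way to verify each coefficient.
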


We call the roots of the node reliability polynomial of a graph $G$ the \textit{node reliability roots} of $G.$  In order to obtain results on node reliability roots we often use a related polynomial.  Form (\ref{Cformnoderel}) of the node reliability leads us to introduce the related generating polynomial for the collection of connected sets,
\[
\con(G;x)=\sum_{k=1}^n c_kx^k,
\]
which we call the \textit{connected set polynomial}.  Given the node reliability polynomial of a graph $G$, the connected set polynomial of $G$ is easy to obtain, and vice versa, as\begin{align*}
\nrel(G;p)=(1-p)^n\cdot\con\left(G;\tfrac{p}{1-p}\right)
\end{align*}
and
\begin{align*}
\con(G;x)=(1+x)^n\cdot\nrel\left(G;\tfrac{x}{1+x}\right).
\end{align*}
The connected set polynomial is in some ways easier to analyze than the node reliability polynomial, and results on the roots of the connected set polynomial will have immediate implications for node reliability roots.  We call the roots of the connected set polynomial of a graph $G$ the \textit{connected set roots} of $G.$

\section{Real roots of the node reliability polynomial}

Many graphs have nonzero real all-terminal reliability roots; in fact, it was shown in \cite{BrownColbournLogConcave} that every connected graph has a subdivision of edges such that the resulting graph's all-terminal reliability polynomial has all real roots.  We show in contrast that the node reliability polynomial of {\em any} connected graph on at least three nodes has a nonreal root. In order to do so, we will first prove a similar result for the connected set polynomial of any such graph.  The straightforward transformation from the connected set polynomial to the node reliability polynomial will yield the desired result.  

\begin{lemma}\label{NonrealConnectedSetRoot}
Let $G$ be a connected graph of order $n\geq 3.$  Then $\con(G;x)$ has a nonreal root.
\end{lemma}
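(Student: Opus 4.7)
The plan is to argue by contradiction, turning the hypothetical real-rootedness of $\con(G;x)$ into a rigid constraint on three specific coefficients via Vieta's formulas, and then defeating that constraint with AM--GM.

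First I would observe that since $G$ is connected of order $n$, one can build a connected induced subgraph on $k$ vertices for every $k \in \{1,\ldots,n\}$ (start from any spanning tree and add vertices one leaf at a time), so every coefficient $c_k$ is strictly positive. Combined with Observation~\ref{ConnectedBasic}, this gives in particular $c_1 = n$, $c_n = 1$, and $c_{n-1} = n - t$ with $t \geq 0$ the number of cut nodes of $G$. Since $c_0 = 0$ and the remaining coefficients are positive, I factor $\con(G;x) = x \cdot q(x)$, where $q(x) = c_1 + c_2 x + \cdots + c_n x^{n-1}$ is a degree $n-1$ polynomial with all positive coefficients.

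Next, suppose for contradiction that every root of $\con(G;x)$ is real. Then every root of $q$ is real as well, and since $q$ has only positive coefficients it cannot have a nonnegative root. Hence I may write
\[
q(x) = \prod_{i=1}^{n-1} (x + s_i), \qquad s_i > 0,
\]
using $c_n = 1$. Vieta's formulas then force
\[
\sum_{i=1}^{n-1} s_i = c_{n-1} = n - t \quad \text{and} \quad \prod_{i=1}^{n-1} s_i = c_1 = n.
\]

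Finally, AM--GM gives
\[
\frac{n-t}{n-1} = \frac{1}{n-1}\sum_{i=1}^{n-1} s_i \;\geq\; \Bigl(\prod_{i=1}^{n-1} s_i\Bigr)^{1/(n-1)} = n^{1/(n-1)},
\]
so, raising to the $(n-1)$-st power and using $t \geq 0$, one would need $\bigl(\tfrac{n}{n-1}\bigr)^{n-1} \geq n$. But $\bigl(1 + \tfrac{1}{n-1}\bigr)^{n-1} < e < 3 \leq n$ for every $n \geq 3$, a contradiction. The only real subtlety is checking that $c_1,\ldots,c_n$ are all positive (so that the factorisation is valid and $\prod s_i$ really equals $n$); everything else is two lines of elementary inequality, which I expect to be straightforward rather than the main obstacle.
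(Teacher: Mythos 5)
Your proof is correct, and while it shares the paper's high-level strategy (assume $\con(G;x)/x$ has all real, hence all negative, roots, then derive a coefficient inequality that fails), the specific inequality you invoke is genuinely different and in some ways cleaner. The paper cites a Newton-type inequality from van Mieghem relating four coefficients, $\frac{c_{n-1}}{c_n}\cdot\frac{c_2}{c_1}\geq (n-1)^2$, which under the hood is AM--HM (or Cauchy--Schwarz) applied to the roots and their reciprocals; it then needs the extra graph-theoretic bound $m\leq\binom{n}{2}$ on the edge count $c_2$ to reach a contradiction. You instead apply plain AM--GM directly to the roots $s_i>0$, which relates only $c_1$, $c_{n-1}$, $c_n$ via $\sum s_i=c_{n-1}$ and $\prod s_i=c_1$, and then the contradiction $\left(\tfrac{n}{n-1}\right)^{n-1}\geq n$ fails because $\left(1+\tfrac{1}{n-1}\right)^{n-1}<e<3\leq n$. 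This sidesteps $c_2$ and the edge-count bound entirely, using nothing beyond $c_1=n$, $c_{n-1}=n-t\leq n$, $c_n=1$, and the monotone convergence of $(1+1/k)^k$ to $e$. The Vieta computations and the positivity argument for the $c_k$ (extending along a spanning tree) are both sound, so the proof stands as a slightly more elementary alternative to the one in the paper.
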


\begin{proof}
Let $\con(G;x)=\displaystyle\sum_{k=1}^n c_kx^k=x\displaystyle\sum_{k=1}^n c_kx^{k-1}.$  Suppose, to reach a contradiction, that $\con(G;x)$ has all real roots.  Since $c_k>0$ for all $k\in\{1,\hdots,n\},$ all roots of $\con(G;x)/x=\displaystyle\sum_{k=1}^n c_kx^{k-1}$ must be strictly negative.  We will need the following straightforward result (see, for example, \cite[p. 265]{Mieghem}): 

\begin{adjustwidth}{0.2in}{0.2in}
Let $f(z)=\displaystyle\sum_{k=0}^n  a_kz^k$ with $a_n\neq 0$.  If all zeros of $f$ are real and positive then 
\[ \frac{a_{n-1}}{a_n}\cdot\frac{a_1}{a_0}\geq n^2.\]
\end{adjustwidth}

\noindent An immediate corollary (by considering $f(-x)$) is that if all zeros of $f$ are real and negative then
\[
\frac{a_{n-1}}{a_n}\cdot\frac{a_1}{a_0}\geq n^2.
\]

\noindent It follows that if $\con(G;x)$ has all real roots, we must have 
\[
\frac{c_{n-1}}{c_n}\cdot\frac{c_2}{c_1}\geq (n-1)^2.
\]
However, from Observation \ref{ConnectedBasic},
\[
\frac{c_{n-1}}{c_n}\cdot\frac{c_2}{c_1}=(n-t)\frac{m}{n},
\]
where $m$ is the number of edges and $t$ is the number of cut nodes of $G$.  Since $m\leq\binom{n}{2}$ and $n\geq 3,$
\[
\frac{(n-t)m}{n}\leq \frac{(n-t)(n-1)}{2}\leq \frac{n(n-1)}{2}<(n-1)^2,
\]
a contradiction.  Therefore, $\con(G;x)/x$ (and hence $\con(G;x)$ itself) must have a nonreal root.
\end{proof}

\begin{theorem}\label{NonrealNodeRoot}
Let $G$ be a connected graph of order $n.$  If $n\geq 3$ then $\nrel(G;p)$ has a nonreal root.
\end{theorem}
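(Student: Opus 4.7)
The plan is to use the transformation
\[
\nrel(G;p) = (1-p)^n \cdot \con\!\left(G;\tfrac{p}{1-p}\right)
\]
to transport a nonreal root of the connected set polynomial (which exists by Lemma~\ref{NonrealConnectedSetRoot}) to a nonreal root of $\nrel(G;p)$. The substitution $x = p/(1-p)$ is a Möbius transformation with inverse $p = x/(1+x)$, and Möbius transformations with real coefficients send nonreal numbers to nonreal numbers, so the idea should go through almost mechanically.

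First I would invoke Lemma~\ref{NonrealConnectedSetRoot} to obtain a nonreal complex number $\alpha$ with $\con(G;\alpha) = 0$. Since $\alpha$ is nonreal we have $\alpha \neq -1$, so the value $p_0 = \alpha/(1+\alpha)$ is a well-defined complex number, and moreover $p_0 \neq 1$. Next I would compute $p_0/(1-p_0)$ directly and check that it equals $\alpha$, so that the identity above gives
\[
\nrel(G;p_0) = (1-p_0)^n \cdot \con(G;\alpha) = (1-p_0)^n \cdot 0 = 0.
\]
Thus $p_0$ is a root of $\nrel(G;p)$.

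Finally I would argue that $p_0$ is nonreal: if $p_0$ were real then $\alpha = p_0/(1-p_0)$ would also be real (since $p_0 \neq 1$), contradicting the choice of $\alpha$. Equivalently, the Möbius map $\alpha \mapsto \alpha/(1+\alpha)$ has real coefficients and is a bijection on $\mathbb{C} \setminus \{-1\}$, so it preserves the property of being nonreal. This yields the desired nonreal root of $\nrel(G;p)$.

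There is no substantive obstacle here once Lemma~\ref{NonrealConnectedSetRoot} is in hand; the only thing to be careful about is the bookkeeping at the poles of the transformation (namely $\alpha = -1$ and $p_0 = 1$), and both are avoided for free because a nonreal $\alpha$ is automatically different from $-1$ and therefore produces a $p_0$ different from $1$. The real mathematical content of the theorem is already packed into Lemma~\ref{NonrealConnectedSetRoot}.
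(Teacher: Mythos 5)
Your proof is correct and follows exactly the same route as the paper: invoke Lemma~\ref{NonrealConnectedSetRoot} for a nonreal connected set root $\zeta$, map it via the M\"obius transformation $p = \zeta/(1+\zeta)$ (well-defined since $\zeta \neq -1$), and use the identity relating $\nrel$ and $\con$ together with the fact that this real-coefficient M\"obius map preserves nonreality. There is no substantive difference from the paper's argument.
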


\begin{proof}
By Lemma \ref{NonrealConnectedSetRoot}, the connected set polynomial $\con(G;x)$ has some nonreal root $\zeta.$  Let $p_\zeta=\frac{\zeta}{1+\zeta},$ which is well defined as $\zeta\neq -1.$  The reader can verify that $p_\zeta$ is real if and only if $\zeta$ is real, so $p_\zeta$ is nonreal.  We evaluate
\begin{align*}
\nrel(G;p_\zeta)&=(1-p_\zeta)^n \con(G;\tfrac{p_{\zeta}}{1-p_\zeta})\\
&=(1-p_\zeta)^n\con\left(G;\frac{\tfrac{\zeta}{1+\zeta}}{1-\tfrac{\zeta}{1+\zeta}}\right)\\
&=(1-p_\zeta)^n\con(G;\zeta)\\
&=0,
\end{align*}
which shows that $p_\zeta$ is a nonreal root of $\nrel(G;p)$.
\end{proof}

Since nonreal roots of polynomials with real coefficients come in conjugate pairs, every connected graph on $3$ or more nodes has at least \textit{two} nonreal node reliability roots.  
Theorem \ref{NonrealNodeRoot} provides the first major difference between the roots of node reliability polynomials and all-terminal reliability polynomials.  While every connected graph has a subdivision with all real all-terminal reliability roots, the only connected graphs with all real node reliability roots are $K_1$ and $K_2.$ We remark that there are, in fact, graphs that have {\it all} of their nonzero node reliability roots being nonreal; for example, the graph $K_{2n+1}$ satisfies this property for any $n\geq 1.$

We turn now to the location of the real roots of node reliability. It was shown in \cite{BCConjecture} that the real roots of all-terminal reliability polynomials for connected (multi)graphs are contained in $\{0\}\cup(1,2]$. It is obvious that every graph has a real node reliability root, namely $0.$ Moreover, it is easy to see that no graph on $n\geq 1$ nodes has a node reliability root in the open interval $(0,1)$, as for any $p\in(0,1),$ 
\[
\nrel(G;p)=\sum_{k=1}^n c_kp^k(1-p)^{n-k} > 0
\]
since $c_1=n,$ and all other coefficients are nonnegative.

However, in sharp contrast to what is known for all-terminal reliability, we demonstrate that the node reliability polynomial can have arbitrarily large real roots.  

\begin{theorem}\label{LargeNodeRelRoot}
The collection of real node reliability roots is unbounded.
\end{theorem}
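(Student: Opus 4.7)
The plan is to exhibit the family of paths $P_n$ on an even number $n$ of vertices and show that their node reliability polynomials have real roots tending to $+\infty$. I would start by noting that every connected subset of $P_n$ is a subpath and that there are exactly $n-k+1$ such subpaths on $k$ vertices, so
\[ \con(P_n;x) = \sum_{k=1}^n (n-k+1)x^k. \]
A direct manipulation via geometric-series identities yields
\[ (1-x)^2\,\con(P_n;x) = x\bigl[x^{n+1} - (n+1)x + n\bigr]. \]
Setting $g(x) := x^{n+1} - (n+1)x + n$, one checks that $g(1) = g'(1) = 0$, so $(x-1)^2$ divides $g$ and cancels the $(1-x)^2$ on the left. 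Hence the nonzero roots of $\con(P_n;x)$ are precisely the roots of $g$ other than its double root at $x = 1$.

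Next, for even $n$ I would analyze $g$ on $(-\infty,-1)$. Since $g'(x) = (n+1)(x^n - 1)$ is positive there, $g$ is strictly increasing on $(-\infty,-1)$; combined with $g(-1) = 2n > 0$ and $g(x) \to -\infty$ as $x \to -\infty$ (because $n+1$ is odd), the intermediate value theorem produces a unique real root $\rho_n \in (-\infty,-1)$ of $g$. Moreover, for any fixed $\epsilon > 0$,
\[ g(-1-\epsilon) = -(1+\epsilon)^{n+1} + (n+1)(1+\epsilon) + n \longrightarrow -\infty \]
as $n \to \infty$, since $(1+\epsilon)^{n+1}$ grows exponentially while the remaining terms grow only linearly in $n$. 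Thus $\rho_n \in (-1-\epsilon, -1)$ for all sufficiently large even $n$, which shows $\rho_n \to -1^-$.

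Finally, by the identity $\nrel(P_n;p) = (1-p)^n \con(P_n; p/(1-p))$, the real number $p_n := \rho_n/(1+\rho_n)$ is a real root of $\nrel(P_n;p)$ (well-defined since $\rho_n \neq -1$). Writing $\delta_n := -1 - \rho_n > 0$, we have $p_n = (1+\delta_n)/\delta_n \to +\infty$ as $\delta_n \to 0^+$; hence for every $M > 0$, a sufficiently large even $n$ gives a connected graph $P_n$ whose node reliability polynomial has a real root exceeding $M$, proving the claim. The key subtle point is the parity restriction: for odd $n$, the critical-point analysis gives $g \geq 0$ everywhere (its only real zero being the double root at $x = 1$), so $\con(P_n;x)$ has only $x = 0$ as a real root and no large real node reliability roots arise from odd paths. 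The parity of $n$ is essential precisely because only for odd $n+1$ does $g$ dip to $-\infty$ on the left.
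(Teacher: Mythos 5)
Your proof is correct, and it takes a genuinely different route from the paper. The paper works with the odd cycles $C_{2n+1}$ and argues directly with the node reliability polynomial, using a closed form for $\nrel(C_{2n+1};p)$, truncated binomial estimates, and sign checks on a quartic (verified by computer algebra) to bracket a real root in the explicit interval $(2n^2-1,2n^2)$. You instead use the even paths $P_n$, pass to the connected set polynomial, and reduce everything to the auxiliary polynomial $g(x)=x^{n+1}-(n+1)x+n$, where the factorization $(x-1)^2\con(P_n;x)=xg(x)$ together with $g(1)=g'(1)=0$ shows that the nonzero connected set roots of $P_n$ are exactly the roots of $g$ other than the double root at $1$. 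The monotonicity of $g$ on $(-\infty,-1)$, the sign change between $g(-1)=2n>0$ and $g(-1-\varepsilon)\to-\infty$, and the intermediate value theorem then give a unique real root $\rho_n\in(-1-\varepsilon,-1)$ approaching $-1$ from below, which the M\"obius transformation $p=x/(1+x)$ sends to a real node reliability root $p_n=(1+\delta_n)/\delta_n\to+\infty$. I verified the algebra: $(x-1)^2\con(P_n;x)=x^{n+2}-(n+1)x^2+nx=x\,g(x)$, $g(-1)=2n$ for even $n$, $g'(x)=(n+1)(x^n-1)>0$ on $(-\infty,-1)$, $g''(1)=n(n+1)\neq 0$ so the root at $1$ is exactly double, $p_n\neq 1$, and $p_n/(1-p_n)=\rho_n$. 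Your parity remark about odd $n$ is also correct ($g\geq 0$ with the sole real zero a double root at $1$). The trade-off: your argument is more elementary and needs no numerical verification, but it is only asymptotic (no explicit growth rate for $p_n$), while the paper's estimate pins the root of $C_{2n+1}$ to a concrete window of width $1$ near $2n^2$. Both establish unboundedness; yours is cleaner, the paper's is more quantitative.
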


\begin{proof}
We prove that for all $n \geq 2$, the cycle $C_{2n+1}$ has a node reliability root in the interval $(2n^2-1,2n^2).$  We begin by finding a convenient closed form for the node reliability of $C_{2n+1}$.  Note first of all that the cycle $C_N$ satisfies $c_k=N$ for all $k\in\{1,\hdots,N-1\},$ as the connected sets of order $k$ are exactly the $N$ sets of $k$ consecutive nodes.  And of course $c_N=1$ for any connected graph on $N$ nodes.  Thus we have
\begin{align*}
\nrel(C_{2n+1};p)&=p^{2n+1}+(2n+1)\sum_{k=1}^{2n}p^k(1-p)^{2n+1-k}\\
&=p^{2n+1}+(2n+1)(1-p)^{2n+1}\sum_{k=1}^{2n}\left(\tfrac{p}{1-p}\right)^k.
\end{align*}
Using the basic sum identity
\[
\sum_{k=1}^nx^k=\frac{x^{n+1}-x}{x-1},
\]
we obtain
\begin{align*}
\nrel(C_{2n+1};p)&=p^{2n+1}+(2n+1)(1-p)^{2n+1}\frac{\left(\tfrac{p}{1-p}\right)^{2n+1}-\tfrac{p}{1-p}}{\tfrac{p}{1-p}-1}\\
&=p^{2n+1}+(2n+1)\left[p^{2n+1}-p(1-p)^{2n}\right]\cdot\tfrac{1-p}{2p-1}\\
&=\tfrac{p^{2n+1}}{2p-1}\left[2n-2np+p+(2n+1)(p-1)\left(1-\tfrac{1}{p}\right)^{2n}\right].
\end{align*}

Substituting $p=2n^2-1$ gives 
\begin{align}\label{c2n2minus1}
\nrel(C_{2n+1};2n^2-1)
  & = \frac{(2n^2-1)^{2n+1}}{4n^2-3}\left[f(n)+g(n)\left(1-\tfrac{1}{2n^2-1}\right)^{2n}\right],
\end{align}
where
\[
f(n)=-4n^3+2n^2+4n-1 \ \mbox{ and } \ g(n)=4n^3+2n^2-4n-2.
\]
Using the inequality
\[
\left(1-\tfrac{1}{y}\right)^N<1-\tfrac{N}{y}+\tfrac{\binom{N}{2}}{y^2}
\]
for $y > N$, we obtain 
\[
\left(1-\tfrac{1}{2n^2-1}\right)^{2n}<1-\tfrac{2n}{2n^2-1}+\tfrac{n(2n-1)}{(2n^2-1)^2}.
\]
Applying this inequality to the right side of (\ref{c2n2minus1}), and then expanding and simplifying  yields
\begin{align*}
\nrel(C_{2n+1};2n^2-1)
  & < \tfrac{(2n^2-1)^{2n+1}}{4n^2-3}\left[f(n)+g(n)\left(1-\tfrac{2n}{2n^2-1}+\tfrac{n(2n-1)}{(2n^2-1)^2}\right)\right]\\
&=\tfrac{(2n^2-1)^{2n-1}}{4n^2-3}\left[-4n^4+2n^3+8n^2-2n-3\right].
\end{align*}
It is easily verified using a computer algebra system that the quartic 
\[
h(n)=-4n^4+2n^3+8n^2-2n-3
\]
is negative for $n\geq 2$, and we conclude that $\nrel(C_{2n+1};2n^2-1)$ is negative for $n\geq 2.$

On the other hand, substituting $p=2n^2$ gives
\begin{align}\label{c2n2}
\nrel(C_{2n+1};2n^2) & = \frac{(2n^2)^{2n+1}}{2n-1}\left[(-2n^2+2n)+(2n^2-1)\left(1-\tfrac{1}{2n^2}\right)^{2n}\right].
\end{align}
Using the inequality
\[
\left(1-\tfrac{1}{y}\right)^N>1-\tfrac{N}{y}+\tfrac{\binom{N}{2}}{y^2}-\tfrac{\binom{N}{3}}{y^3}
\]
for $y > N$, we obtain 
\[
\left(1-\tfrac{1}{2n^2}\right)^{2n}>1-\tfrac{1}{n}+\tfrac{2n-1}{4n^3}-\tfrac{(2n-1)(n-1)}{12n^5}.
\]
Applying this inequality to the right side of (\ref{c2n2}) and simplifying, we find
\begin{align}\label{c2n2ineq}
\nrel(C_{2n+1};2n^2) > \frac{(2n^2)^{2n+1}}{2n-1}\left[\frac{2n^4+3n^2-3n+1}{12n^5}\right].
\end{align}
Since it is easy to verify that all factors on the right side of (\ref{c2n2ineq}) are positive for $n\geq 2,$ it follows that $\nrel(C_{2n+1};2n^2)$ is positive for $n \geq 2$.

Thus for $n \geq 2$, $\nrel(C_{2n+1};2n^2-1)<0$ and $\nrel(C_{2n+1};2n^2)>0.$  We conclude by the Intermediate Value Theorem that $\nrel(C_{2n+1};p)$ has a root in the interval $\left(2n^2-1,2n^2\right).$
\end{proof}

\section{The closure of the collection of node reliability roots}
 
It was shown in \cite{BCConjecture} that all-terminal reliability roots are dense in the disk $|z-1|\leq 1$, but it is unknown whether they are dense in the entire complex plane, and it is suspected that they are not.  Hence the following result on the closure of the collection of node reliability roots is a surprising one.

\begin{theorem}\label{NodeRootsDense}
The collection of all node reliability roots is dense in the entire complex plane.
\end{theorem}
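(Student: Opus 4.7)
The plan is a Möbius reduction followed by a Beraha--Kahane--Weiss argument applied to a carefully chosen family of graphs.

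By the substitution $p = x/(1+x)$ used in the proof of Theorem~\ref{NonrealNodeRoot}, connected set roots map bijectively to node reliability roots under the Möbius homeomorphism $\mathbb{C}\setminus\{-1\}\to\mathbb{C}\setminus\{1\}$, $x\mapsto x/(1+x)$. Thus density of the connected set roots in $\mathbb{C}$ gives density of the node reliability roots in $\mathbb{C}\setminus\{1\}$; combined with the observation that $\nrel(G;1)=0$ whenever $G$ is disconnected (so $p=1$ is itself attained as a node reliability root), it suffices to prove that the union of all connected set roots over finite graphs is dense in $\mathbb{C}$.

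To achieve that, I would apply the Beraha--Kahane--Weiss theorem to a \emph{bouquet} construction: given a graph $H$ with distinguished vertex $v$, let $H^{\vee k}$ be the graph obtained from $k$ disjoint copies of $H$ by identifying all $k$ copies of $v$ into a single vertex. A short enumeration of connected sets of $H^{\vee k}$, split by whether the identified vertex is used, yields
\[
\con(H^{\vee k};x) \;=\; k\cdot A_H(x)\;+\;x\cdot q_H(x)^{k},
\]
where $A_H(x)=\con(H-v;x)$ and $q_H(x)=B_H(x)/x$, with $B_H(x)=\sum_{S\ni v,\,S\text{ connected in }H} x^{|S|}$. For fixed $H$ and $k\to\infty$, the Beraha--Kahane--Weiss theorem identifies the limit set of the roots as the equimodular curve $\Gamma_H=\{x\in\mathbb{C}:|q_H(x)|=1\}$ together with isolated limit points at zeros of $A_H$ in the region $\{|q_H|<1\}$.

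The main obstacle is then to show that $\bigcup_H\Gamma_H$ is dense in $\mathbb{C}$ as $H$ ranges over all finite graphs with a marked vertex. Each $q_H$ has constant term $1$ and nonnegative integer coefficients (the coefficient of $x^{j-1}$ counts connected subsets of $H$ of size $j$ containing $v$), and the class of realizable $q_H$ is rich, tunable by attaching gadgets such as pendant stars, paths, or cycles at $v$. For each target $z_0\in\mathbb{C}$ and $\epsilon>0$, the strategy is to engineer $H$ so that $|q_H(z_0)|$ is within $\epsilon$ of $1$, which by continuity of $|q_H|$ places a point of $\Gamma_H$ within $O(\epsilon)$ of $z_0$. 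The delicate case is when $z_0$ has positive real part, since $|q_H(z_0)|\geq 1$ strictly for nontrivial $H$ on the positive real axis; covering such targets likely requires passing to a two-parameter family (for example, varying the base graph $H$ jointly with $k$, or using a blown-up cycle $C_m[K_n]$) and invoking a multiparameter form of Beraha--Kahane--Weiss to trace out a two-dimensional region of root accumulation that sweeps out the remaining territory as the second parameter varies.
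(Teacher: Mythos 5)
Your bouquet construction and BKW application are plausible as far as they go, and the Möbius reduction to connected set roots matches the paper's first step. But the core of your argument has a genuine gap, and you acknowledge as much.

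The problem is that the equimodular curves $\Gamma_H = \{\,z : |q_H(z)|=1\,\}$ are a fundamentally inadequate tool for covering the plane. Each $q_H$ is a polynomial with constant term $1$ and nonnegative integer coefficients, with $b_2=\deg(v)\geq 1$, so for real $t>0$ we have $q_H(t)\geq 1+t>1$ for \emph{every} admissible $H$. Thus no $\Gamma_H$ meets the positive real ray, and more importantly each $\Gamma_H$ is a bounded set (a level curve of a nonconstant polynomial). You correctly flag the right half-plane as the ``delicate case,'' but the proposed remedy --- a ``multiparameter form of Beraha--Kahane--Weiss'' applied to a two-parameter family such as $C_m\lexi K_n$ --- is not a standard theorem and is not developed in the proposal; as written it is a pointer to an idea rather than an argument. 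You also do not actually establish that $\bigcup_H\Gamma_H$ covers the rest of the plane. So the proof does not close.

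The paper sidesteps exactly this obstruction, and in a way that is worth absorbing: rather than trying to make BKW curves themselves fill the plane, it proves a separate geometric lemma (Lemma~\ref{Density}) showing that it suffices to find connected set roots at every \emph{distance} from $-1$. The fanning-out is done by the identity $\con(G\lexi K_m;x)=\con\bigl(G;(x+1)^m-1\bigr)$: a root $z$ of $\con(G;\cdot)$ produces roots of $\con(G\lexi K_m;\cdot)$ at every $m$-th root of $z+1$, shifted by $-1$, which sweeps out all arguments around $-1$ as $m\to\infty$. The BKW input is then applied to a single one-parameter family, $P_n+v$, whose limiting curve runs continuously from $-1$ to $\infty$; this supplies roots at every distance from $-1$, and the lexicographic fan-out does the rest. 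Your sketch mentions $C_m\lexi K_n$ in passing, which is close to the right idea, but you deploy it as part of a second BKW limit rather than as a rotation device, and that is where the argument stalls. If you separate the two jobs --- one family to get all distances from $-1$, and lexicographic products with $K_m$ to get all arguments --- your approach can be repaired, but as written it is incomplete.
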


The remainder of this section is devoted to proving Theorem \ref{NodeRootsDense}.  The proof involves several elements, so we give a brief summary of our method here.  Once again we will make use of the connected set polynomial.  We will prove that the collection of connected set roots is dense in the complex plane, from which Theorem \ref{NodeRootsDense} follows easily.  We begin with a proof that the collection of connected set roots is dense in the complex plane if for every positive real number $r>0$ there is a connected set root whose distance from $-1$ is arbitrarily close to $r.$  In order to prove this result, we use the \textit{lexicographic product} with a complete graph to ``fan out'' a given connected set root evenly around the point $z=-1.$  We then present some background on the Beraha-Kahane-Weiss Theorem for finding limits of roots of recursive families of polynomials, which allows us to find a limiting curve of connected set roots that extends from the point $z=-1$ to infinity.  This gives us a limit of connected set roots at any distance from $-1,$ which completes the proof that connected set roots are dense in the entire complex plane.  The proof of Theorem \ref{NodeRootsDense} is straightforward from this point.  We begin with the definition of the lexicographic product of graphs.

\begin{definition}[see \cite{ProductHandbook}, for example]
The \textit{lexicographic product} or \textit{graph composition} $G\lexi H$ of graphs $G$ and $H$ is a graph on node set $V(G)\times V(H)$ such that nodes $(u,x)$ and $(v,y)$ are adjacent if and only if either
\begin{itemize}
\item $u$ is adjacent to $v$ in $G,$ or
\item $u=v$ and $x$ is adjacent to $y$ in $H.$
\end{itemize}
Intuitively, the lexicographic product $G\lexi H$ is the operation of replacing every node of $G$ with a copy of $H.$
\end{definition}

The next lemma gives a formula for the connected set polynomial of the lexicographic product $G\lexi H$ in terms of the connected set polynomials of the factors $G$ and $H.$  The special case of this formula for $H=K_n$ is all that we really need to prove our result on the closure of the collection of connected set roots.

\begin{lemma}\label{Lexicographic}
Let $G$ be a graph of order $n_G$ and let $H$ be a graph of order $n_H.$  The connected set polynomial of the lexicographic product $G\lexi H$ is given by
\[
\con(G\lexi H;x)=\con\left(G;(x+1)^{n_H}-1\right)+n_G\left[\con(H;x)-(x+1)^{n_H}+1\right].
\]
In particular, 
\[
\con(G\lexi K_n;x)=\con\left(G;(x+1)^n-1\right).
\]
\end{lemma}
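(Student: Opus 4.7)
The plan is to compute $\con(G \lexi H; x)$ by expanding it as $\sum_S x^{|S|}$ over connected sets $S \subseteq V(G)\times V(H)$ of the lexicographic product, and then grouping these connected sets by their \emph{shadow} on $G$. For a set $S \subseteq V(G) \times V(H)$, define $\pi(S) = \{u \in V(G) : (\{u\}\times V(H)) \cap S \neq \emptyset\}$ and, for $u \in \pi(S)$, the \emph{fiber} $S_u = \{y \in V(H) : (u,y) \in S\}$. Since the contribution of $S$ to $\con(G \lexi H; x)$ factors as $x^{|S|} = \prod_{u \in \pi(S)} x^{|S_u|}$, knowing which pairs $(\pi(S), (S_u)_{u \in \pi(S)})$ correspond to connected sets $S$ reduces the problem to a product/generating function calculation.

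The key structural step (and the main obstacle, though it is quite straightforward) is the following dichotomy, which I would prove directly from the definition of $\lexi$. A set $S$ with $\pi(S) = \{u\}$ is connected in $G \lexi H$ if and only if $S_u$ is a connected set of $H$, since $(G \lexi H)[\{u\}\times V(H)]$ is isomorphic to $H$. A set $S$ with $|\pi(S)| \geq 2$ is connected in $G \lexi H$ if and only if $\pi(S)$ is connected in $G$, with no further constraint on the (nonempty) fibers $S_u$. The forward direction is immediate from the fact that edges in $G \lexi H$ across distinct fibers come only from edges of $G$. For the reverse direction, given a path $u = v_0, v_1, \ldots, v_k = u'$ in $G[\pi(S)]$ and any $y_i \in S_{v_i}$, consecutive vertices $(v_i, y_i), (v_{i+1}, y_{i+1})$ are adjacent in $G \lexi H$ by the lexicographic adjacency rule; this already connects any two vertices in different fibers, and two vertices in the same fiber $\{u\}\times S_u$ are connected via a detour through any adjacent fiber (which exists because $|\pi(S)|\geq 2$ and $\pi(S)$ is connected in $G$).

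Given this dichotomy, enumeration is immediate. The single-fiber contribution is
\[
\sum_{u \in V(G)} \sum_{S_u \in \mathcal{C}(H)} x^{|S_u|} = n_G \, \con(H; x).
\]
For the multi-fiber contribution, summing over connected sets $T \in \mathcal{C}(G)$ with $|T|\geq 2$, and over independent choices of nonempty $S_u \subseteq V(H)$ for each $u \in T$, and using $\sum_{\emptyset \neq S_u \subseteq V(H)} x^{|S_u|} = (1+x)^{n_H} - 1$, yields
\[
\sum_{\substack{T \in \mathcal{C}(G) \\ |T|\geq 2}} \bigl((1+x)^{n_H}-1\bigr)^{|T|} = \con\!\bigl(G;(1+x)^{n_H}-1\bigr) - n_G\bigl((1+x)^{n_H}-1\bigr),
\]
where the subtraction removes the $|T|=1$ terms. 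Adding the two contributions gives the stated identity.

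The special case $H = K_n$ then follows by observing that every nonempty subset of $V(K_n)$ induces a connected subgraph, so $\con(K_n; x) = \sum_{k=1}^{n} \binom{n}{k} x^k = (1+x)^n - 1$; the bracketed term $\con(H;x) - (1+x)^{n_H} + 1$ vanishes, leaving $\con(G \lexi K_n; x) = \con(G; (1+x)^n - 1)$.
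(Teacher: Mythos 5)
Your proof is correct and takes essentially the same approach as the paper: decompose connected sets of $G \lexi H$ by their projection onto $V(G)$, observe that a projection of size one forces the single fiber to be connected in $H$ while a connected projection of size at least two admits arbitrary nonempty fibers, and then enumerate with the substitution $(x+1)^{n_H}-1$ for the nonempty-subset generating function. You supply a more detailed justification of the connectivity dichotomy (which the paper simply asserts with ``We see that'') and explicitly verify the $K_n$ specialization, but the decomposition and the generating-function bookkeeping are the same.
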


\begin{proof}
Let $C_\star$ be a subset of $V(G\lexi H).$  Define
\[
C_G=\{v\in V(G) \colon \ (v,x)\in C_\star \mbox{ for some } x\in V(H)\}
\]
and for each $v\in C_G$ define
\[
C_{v}=\{x \colon \ (v,x)\in C_\star\}.
\]
We see that $C_\star$ is a connected set in $G\lexi H$ if and only if either
\begin{enumerate}[(i)]
\item $C_G$ is a connected set of $G$ of order at least two (in this case, for each $v\in C_G,$ the set $C_v$ can be any nonempty subset of nodes of $H$); or
\item $C_G=\{v\}$ for some $v$ and $C_v$ is a connected set of $H.$
\end{enumerate}

The connected sets of $G$ of order at least two are enumerated by 
\[
\con(G;x)-n_Gx,
\]
and hence the connected sets of $G\lexi H$ corresponding to case (i) are enumerated by
\[
\con(G;(x+1)^{n_H}-1)-n_G\left[(x+1)^{n_H}-1\right].
\] 
Meanwhile, the connected sets of $G\lexi H$ corresponding to case (ii) are enumerated by
\[
n_G\con(H;x).
\]
We conclude that
\begin{align*}
\con(G\lexi H;x)&=\con\left(G;(x+1)^{n_H}-1\right)+n_G\left[\con(H;x)-(x+1)^{n_H}+1\right]. \qedhere
\end{align*}
\end{proof}

The next lemma demonstrates that finding a family of graphs whose connected set roots extend from $-1$ to infinity is sufficient to show that the collection of all connected set roots is dense in the entire complex plane.  The proof is based on that of a similar result in \cite{Domination}, and uses lexicographic products with complete graphs to ``fan out'' existing connected set roots.

\begin{lemma}[adapted from \cite{Domination}, Theorem 11]\label{Density}
Suppose that for any $r>0$ and $\varepsilon>0$ there is some connected set root $z$ satisfying
\[
||z+1|-r|<\varepsilon;
\]
that is, for any positive real number $r$ we can find some connected set root $z$ whose distance from $-1$ is arbitrarily close to $r.$  Then the collection of all connected set roots is dense in the complex plane.
\end{lemma}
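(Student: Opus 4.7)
The plan is to exploit the special case of Lemma~\ref{Lexicographic} asserting $\con(G \lexi K_n; x) = \con\bigl(G; (x+1)^n - 1\bigr)$, which tells us that each connected set root $\zeta$ of $G$ lifts to $n$ connected set roots of $G \lexi K_n$: namely, the $n$ solutions of $(x+1)^n = \zeta + 1$. These lifts sit on a circle of radius $|\zeta + 1|^{1/n}$ centered at $-1$, evenly spaced at angular intervals of $2\pi/n$. So passing from $G$ to $G \lexi K_n$ radially contracts distances from $-1$ via the $n$-th root while spreading roots uniformly in angle around $-1$, which is exactly what is needed to convert the one-dimensional information supplied by the hypothesis into two-dimensional density.

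To prove the density claim, fix a target $w \in \mathbb{C}$ and $\eta > 0$, and write $w + 1 = r e^{i\theta}$ with $r \geq 0$. The case $r = 0$ (that is, $w = -1$) is immediate: applying the hypothesis with parameters $\eta/2$ and $\eta/2$ produces a connected set root within $\eta$ of $-1$. For $r > 0$, first pick a large integer $n$ (to be chosen later), then invoke the hypothesis with target radius $r^n$ to obtain a graph $G$ and a connected set root $\zeta$ of $G$ with $|\zeta + 1|$ as close to $r^n$ as desired. The $n$ lifts $x_0, \ldots, x_{n-1}$ of $\zeta$ in $G \lexi K_n$ then all satisfy $|x_k + 1| = |\zeta + 1|^{1/n}$, which is close to $r$, and their arguments $\phi_k = (\arg(\zeta + 1) + 2\pi k)/n$ are equally spaced around the circle; in particular one can choose $k$ so that $|\phi_k - \theta| \leq \pi/n$.

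The remaining step is a routine triangle-inequality bound:
\[
|x_k - w| \;=\; \bigl| |\zeta+1|^{1/n} e^{i\phi_k} - r e^{i\theta} \bigr| \;\leq\; \bigl| |\zeta+1|^{1/n} - r \bigr| \;+\; r\,|e^{i\phi_k} - e^{i\theta}| \;\leq\; \bigl| |\zeta+1|^{1/n} - r \bigr| \;+\; \frac{\pi r}{n}.
\]
Taking $n$ large forces the second term below $\eta/2$; then using continuity of $t \mapsto t^{1/n}$ at $t = r^n$, one arranges $|\zeta+1|$ close enough to $r^n$ that the first term is below $\eta/2$ as well. I do not foresee a real obstacle: the only subtlety is that the hypothesis gives no control over $\arg(\zeta + 1)$, but this is precisely what the lexicographic product washes out, since the $n$ lifts are equidistributed in angle around $-1$ regardless of where $\arg(\zeta + 1)$ happens to fall.
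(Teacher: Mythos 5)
Your proposal is correct and follows essentially the same route as the paper: both fix a target in polar form about $-1$, pick $n$ (or $m$) large enough to control the angular spread $\pi/n$, invoke the hypothesis to land $|\zeta+1|$ near $r^n$, and then use $\con(G\lexi K_n;x)=\con(G;(x+1)^n-1)$ to fan $\zeta$ out to $n$ roots with moduli $|\zeta+1|^{1/n}$ and equidistributed arguments around $-1$. The only cosmetic difference is that you finish with an explicit triangle-inequality estimate, whereas the paper bounds modulus and argument separately and implicitly.
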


\begin{proof}
Let $r>0$ and $\theta\in[0,2\pi).$  It suffices to show that for any $\varepsilon>0$ there is a root $x$ of a connected set polynomial such that $x+1$ has modulus within $\varepsilon$ of $r$ and argument within $\varepsilon$ of $\theta.$  We may assume that $\varepsilon<r,$ so that $r-\varepsilon>0.$  We can choose $m$ large enough so that $\tfrac{\pi}{m}<\varepsilon,$ and hence for any complex number $w\neq 0,$ there is an $m^\mathrm{th}$ root of $w$ whose argument is within $\varepsilon$ of $\theta.$

By the supposition in the lemma statement, there is a connected set root $z$ of some graph $G$ that satisfies
\[
(r-\varepsilon)^m<|z+1|<(r+\varepsilon)^m.
\]
Consider the graph $G\lexi K_m.$   By Lemma \ref{Lexicographic}, any complex number $x$ satisfying 
\[
(x+1)^m-1=z
\]
is a root of $C(G\lexi K_m;x).$  For any such $x,$ we have
\[
|x+1|^m=|z+1|,
\]
so that 
\[
r-\varepsilon<|x+1|<r+\varepsilon.
\]
Further, for at least one such $x,$ the argument of $x+1$ is within $\varepsilon$ of $\theta$ by our choice of $m.$
\end{proof}

In order to show that connected set roots are dense in the complex plane, by Lemma \ref{Density} it now suffices to show that there is a limiting curve of connected set roots that extends from the point $-1$ to infinity.  We will need a precise definition for a limit of roots.

\begin{definition}
If $\{f_n(x)\colon\ n\in\mathbb{N}\}$ is a family of (complex) polynomials, we say that a number $z\in\mathbb{C}$ is a \textit{limit of roots} of $\{f_n(x)\colon\ n\in\mathbb{N}\}$  if there is a sequence $\{z_n \colon \ n\in\mathbb{N}\}$ such that $f_n(z_n)=0$ and $z_n\rightarrow z$ as $n\rightarrow\infty$.
\end{definition}

Under certain nondegeneracy conditions given in \cite{Beraha}, $z$ is a limit of roots of $\{f_n(z)\colon n\in\mathbb{N}\}$ if and only if either $f_n(z)=0$ for all sufficiently large $n$, or $z$ is a limit point of the set of all roots of the family.  The main result in \cite{Beraha} concerns limits of roots of certain recursively defined families of polynomials.  The solution of the recursion
\[
P_{n+k}(z)=-\sum_{i=1}^k f_i(z) P_{n+k-i}(z)
\]
depends on the roots of the characteristic equation
\[
Q_z(\lambda)=\lambda^{k}+\sum_{i=1}^k f_i(z) \lambda^{k-i}=0.
\]
Let these roots be $\lambda_1(z), \lambda_2(z),\hdots,\lambda_k(z),$ with possible repetitions.  If the $\lambda_i(z)$ are distinct for a particular $z,$ then
\begin{align}\label{RecursionSolution}
P_n(z)=\sum_{i=1}^k\alpha_i(z)\lambda_i(z)^n,
\end{align}
where $\alpha_i(z)$ are fixed polynomials determined by solving the system of equations that arises from letting $n=0,\hdots,k-1$ in (\ref{RecursionSolution}).  If there are repeated root values at $z,$ the solution is modified in the usual way (see \cite{Beraha}).  For example, if $\lambda_i(z)=\lambda_j(z),$ the term $\alpha_i\lambda_i^n+\alpha_j\lambda_j^n$ is replaced by $\alpha_{i_1}\lambda_i^n+n\alpha_{i_2}\lambda_i^{n-1}.$

Beraha, Kahane, and Weiss characterized the limits of roots of such a recursive family in \cite{Beraha}, and Brown and Hickman made the observation that any family of polynomials of the form (\ref{RecursionSolution}) satisfies such a recursion \cite{BrownLimit}.  This gives the following important theorem, which we refer to as the \textit{Beraha-Kahane-Weiss Theorem}.

\begin{theorem}[Beraha-Kahane-Weiss Theorem, cf. \cite{BrownLimit}]\label{Beraha}
Let
\begin{align*}
f_n(x)=\alpha_1(x)\lambda_1(x)^n+\alpha_2(x)\lambda_2(x)^n+\hdots+\alpha_k(x)\lambda_k(x)^n,
\end{align*}
where the $\alpha_i(x)$ and $\lambda_i(x)$ are fixed non-zero polynomials such that for no pair $i\neq j$ is $\lambda_i(x)=\omega\lambda_j(x)$ for some $\omega\in\mathbb{C}$ of unit modulus.  Then $z\in\mathbb{C}$ is a limit of roots of the family $\{f_n(x)\colon\ n\in\mathbb{N}\}$ if and only if either
\begin{enumerate}[\normalfont(i)]
\item two or more of the $\lambda_i(z)$ are of equal modulus, and strictly greater (in modulus) than the others; or
\item for some $j$, $\lambda_j(z)$ has modulus strictly greater than all the other $\lambda_i(z)$ have, and $\alpha_j(z)=0$.
\end{enumerate}
The same characterization holds when the characteristic equation of the associated recursion has repeated roots.  In particular, if the term $\alpha_i\lambda_i^n+\alpha_j\lambda_j^n$ in $f_n(x)$ is replaced by $\alpha_{i_1}\lambda_i^n+n\alpha_{i_2}\lambda_i^{n-1},$ the same conclusion holds.  In this case part {\normalfont{(ii)}} needs to be reworded slightly to: for some $j,$ $\lambda_j(z)$ has modulus strictly greater than all the other $\lambda_i(z)$ have, and $\alpha_{j_k}(z)=0$ for some $k.$
\end{theorem}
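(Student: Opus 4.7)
The plan is to prove both directions by normalizing $f_n(x)$ near a candidate point $z$ by an appropriate dominant term and then invoking either elementary estimates, Hurwitz's theorem, or an argument-principle/equidistribution argument, depending on the case. The non-degeneracy hypothesis that no $\lambda_i/\lambda_j$ is a unimodular constant will be essential only in the "tied dominance" case; elsewhere it is purely a bookkeeping device ensuring the recursion is minimal.

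For the "only if" direction, suppose that at some $z$ neither (i) nor (ii) holds. Then there is a unique index $j$ with $|\lambda_j(z)| > |\lambda_i(z)|$ for all $i\neq j$, and $\alpha_j(z)\neq 0$. By continuity one can choose a small open disk $U\ni z$ and a constant $\rho<1$ with $|\lambda_i(x)/\lambda_j(x)|\leq\rho$ for all $i\neq j$ and $x\in U$, while $\alpha_j$ and $\lambda_j$ are nonvanishing on $U$. Then
\[
f_n(x)=\alpha_j(x)\lambda_j(x)^n\Bigl[1+\sum_{i\neq j}\tfrac{\alpha_i(x)}{\alpha_j(x)}\bigl(\tfrac{\lambda_i(x)}{\lambda_j(x)}\bigr)^n\Bigr],
\]
and the bracket converges uniformly to $1$ on $U$, hence stays bounded away from $0$ for large $n$. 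Thus $f_n$ has no zeros in $U$ for all sufficiently large $n$, contradicting that $z$ is a limit of roots.

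For the "if" direction in case (ii), assume $\lambda_j(z)$ is strictly dominant and $\alpha_j(z)=0$. Choose a disk $U\ni z$ on which $\lambda_j$ is nonvanishing and strictly dominant, and define $g_n(x)=f_n(x)/\lambda_j(x)^n$. Then
\[
g_n(x)=\alpha_j(x)+\sum_{i\neq j}\alpha_i(x)\bigl(\tfrac{\lambda_i(x)}{\lambda_j(x)}\bigr)^n\longrightarrow \alpha_j(x)
\]
uniformly on $U$. Since $\alpha_j$ is a nonzero polynomial with $\alpha_j(z)=0$, Hurwitz's theorem forces $g_n$ (hence $f_n$) to have a zero in every sufficiently small disk around $z$ for all large $n$, so $z$ is a limit of roots. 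For the "if" direction in case (i), the main work lies here: let $S=\{i_1,\dots,i_r\}$ with $r\geq 2$ be the indices attaining the maximum modulus at $z$, and set $h_n(x)=f_n(x)/\lambda_{i_1}(x)^n$, so that
\[
h_n(x)=\alpha_{i_1}(x)+\sum_{l=2}^{r}\alpha_{i_l}(x)\mu_l(x)^n+o(1),\qquad \mu_l(x):=\tfrac{\lambda_{i_l}(x)}{\lambda_{i_1}(x)},
\]
uniformly on a small disk around $z$. The ratios $\mu_l$ satisfy $|\mu_l(z)|=1$ but, by the non-unimodular-constant hypothesis, none of them is identically a unimodular constant, so $\arg\mu_l(x)^n$ varies rapidly and coherently along small circles centered at $z$ as $n\to\infty$. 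The hard step is to leverage this to show that the image of such a circle under $h_n$ winds around the origin for infinitely many $n$, so that by the argument principle $h_n$ (hence $f_n$) must have a zero inside the circle for those $n$; the classical BKW proof carries this out via an equidistribution/Rouché-type estimate, and this is the main obstacle.

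Finally, the repeated-root modification is handled identically: replacing $\alpha_i\lambda_i^n+\alpha_j\lambda_j^n$ by $\alpha_{i_1}\lambda_i^n+n\alpha_{i_2}\lambda_i^{n-1}$ changes the relevant normalized expression by at most polynomial-in-$n$ factors, which does not affect uniform convergence of the subdominant terms nor the Hurwitz/winding arguments; part (ii) needs only the obvious rewording that $\alpha_{j_k}(z)=0$ for some $k$, as stated.
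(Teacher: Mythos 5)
This theorem is quoted in the paper as a known result (Beraha--Kahane--Weiss, in the reformulation of Brown and Hickman) and the paper supplies no proof of its own, so there is no internal argument to compare yours against. Judged on its own terms, your proposal correctly handles the ``only if'' direction (factoring out the strictly dominant term $\alpha_j(x)\lambda_j(x)^n$ and noting that the bracket tends uniformly to $1$ on a small disk, so $f_n$ is zero-free there for large $n$) and case (ii) of the ``if'' direction (normalizing by $\lambda_j^n$ and applying Hurwitz to $g_n\to\alpha_j$). These are the standard easy halves of the theorem.

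The problem is case (i), which you yourself label ``the main obstacle'' and then do not resolve. This case is the entire content of the theorem: one must show that when two or more of the $\lambda_i(z)$ tie for strictly maximal modulus, every neighbourhood of $z$ contains zeros of $f_n$ for infinitely many $n$. The hypothesis that no $\lambda_i/\lambda_j$ is a unimodular constant guarantees that each ratio $\mu_l$ is a non-constant analytic function with $|\mu_l(z)|=1$, hence an open map near $z$; the classical argument uses this to produce, along a suitable subsequence of $n$, points near $z$ at which the dominant terms nearly cancel, and then closes with a Rouch\'e or winding-number estimate. Asserting that the classical proof ``carries this out'' is a citation, not a proof. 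A second, smaller issue: your dismissal of the repeated-root modification as ``polynomial-in-$n$ factors which do not affect uniform convergence'' is wrong as stated. When the repeated eigenvalue $\lambda_i$ is dominant or tied for dominance, the normalized term $n\alpha_{i_2}(x)\lambda_i(x)^{n-1}/\lambda_j(x)^n$ is unbounded in $n$, so the normalized sequence does not converge and both the Hurwitz argument and the cancellation argument must be redone (for the reworded (ii), for instance, one needs the zero of $\alpha_{i_2}$ at $z$ to absorb the factor of $n$). Since the paper treats this theorem as citable background, the practical course is to cite the original sources; a self-contained proof would require carrying out case (i) in full.
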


We now find a closed formula for a family of connected set polynomials on which we will apply the Beraha-Kahane-Weiss Theorem.  We start by computing the connected set polynomial of the path $P_n$ for $n\in\mathbb{N}.$  A connected set of order $k$ in $P_n$ must consist of $k$ consecutive nodes of the path, and there are $n-k+1$ sets of $k$ consecutive nodes in $P_n.$  Hence $c_k(P_n)=n-k+1$ for all $k\in\{1,\hdots,n\},$ and 
\[
\con(P_n;x)=\sum_{k=1}^n(n-k+1)x^k
\]  
A closed formula for $\con(P_n;x)$ is easily obtained.  We have
\begin{align*}
(x-1)\con(P_n;x)&=x\con(P_n;x)-\con(P_n;x)\\
&=x^{n+1}+x^n+\hdots +x^2-nx\\
&=\frac{x^{n+2}-x^2}{x-1}-nx,
\end{align*}
so that 
\[
\con(P_n;x)=\frac{x^{n+2}-x^2}{(x-1)^2}-\frac{nx}{x-1}.
\]

\begin{figure}
\centering{
\begin{overpic}[scale=0.5]{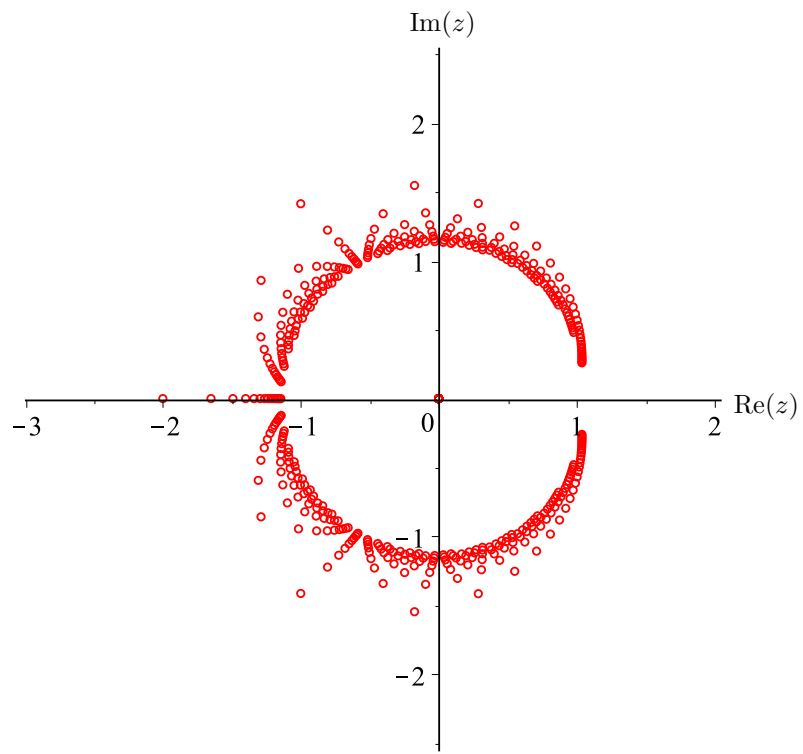}
\put(100,48){\footnotesize $\Re(z)$}
\put(56,100){\footnotesize $\Im(z)$}
\end{overpic}}
\caption{The connected set roots of $P_n$ for $n\in\{1,\hdots,30\}.$}
\label{PathRoots}
\end{figure}

The connected set roots of the path $P_n$ for $n\leq 30$ are shown in Figure \ref{PathRoots}.  It can be proven that the modulus of any connected set root of $P_n$ (for any $n$) is at most $2$ (see \cite{MolThesis}).  However, we see below that joining a single node to the path $P_n$ has a drastic effect on the connected set roots.  Given a graph $G$ and a node $v\not\in V(G),$ the \textit{join} $G+v$ is the graph on node set $V(G)\cup \{v\}$ and edge set $E(G)\cup \{\{u,v\}\colon\ u\in V(G)\}.$  That is, the node $v$ is added to $G$ along with an edge between $v$ and every node of $G.$

\begin{lemma}\label{JoinLemma}
Let $G$ be a graph on $n$ nodes.  The connected set polynomial of the join $G+v$ is given by
\[
\con(G+v;x)=\con(G;x)+x(x+1)^n.
\]
\end{lemma}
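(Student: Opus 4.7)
The plan is to partition the connected sets of $G+v$ according to whether or not they contain the new vertex $v$, and then enumerate each part separately. Since $\con(G+v;x)$ is the generating polynomial for all connected sets of $G+v$ by order, the sum of these two generating polynomials will give us the desired formula.

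First I would observe that the connected sets of $G+v$ that do \emph{not} contain $v$ are exactly the connected sets of the induced subgraph on $V(G)$, which is just $G$ itself. These therefore contribute exactly $\con(G;x)$ to $\con(G+v;x)$.

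Next, for the connected sets that \emph{do} contain $v$: because $v$ is adjacent to every node of $G$, for any subset $S\subseteq V(G)$ (including $S=\emptyset$), the set $\{v\}\cup S$ induces a connected subgraph of $G+v$ — every node of $S$ is joined to $v$. Conversely, every connected set of $G+v$ containing $v$ has this form. The number of such sets of order $k+1$ is $\binom{n}{k}$ for each $k\in\{0,1,\hdots,n\}$, so the contribution to the generating polynomial is
\[
\sum_{k=0}^n \binom{n}{k} x^{k+1} = x(x+1)^n.
\]
Adding the two contributions yields the formula $\con(G+v;x)=\con(G;x)+x(x+1)^n$.

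There is essentially no obstacle here — the argument reduces to the key observation that joining a universal vertex makes \emph{every} supersets-of-$\{v\}$ connected, which bypasses any nontrivial combinatorial analysis of the underlying structure of $G$. The entire proof is a two-line case split followed by the binomial theorem.
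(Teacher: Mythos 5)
Your proof is correct and takes essentially the same approach as the paper: partitioning the connected sets of $G+v$ by whether they contain $v$, noting that those avoiding $v$ are exactly the connected sets of $G$, and that those containing $v$ correspond bijectively to arbitrary subsets of $V(G)$ because $v$ is universal, yielding $x(x+1)^n$. The only cosmetic difference is that you spell out the binomial-theorem counting explicitly, which the paper leaves implicit.
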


\begin{proof}
The connected sets of $G+v$ can be partitioned into those containing $v$ and those not containing $v.$  The connected sets of $G$ not containing $v$ are enumerated by $\con(G;x).$  The connected sets of $G+v$ containing $v$ correspond simply to the subsets of $V(G),$ since $v$ is adjacent to every node of $G.$  Explicitly, $U\subseteq V(G)$ corresponds to the connected set $U\cup \{v\}$ of $G+v.$  Hence the connected sets of $G+v$ containing $v$ are enumerated by $x(x+1)^n.$  Therefore,
\[
\con(G+v;x)=\con(G;x)+x(x+1)^n. \qedhere
\]
\end{proof}

The connected set roots of $P_n+v$ for $n\leq 50$ are shown in Figure \ref{PathJoinvRoots}, and one can see that there are roots that appear to grow large in several directions.  We are now ready to prove that the collection of connected set roots is dense in the complex plane.  The proof entails showing that the limiting curve of the connected set roots of the family of graphs $\{P_n+v\colon\ n\in\mathbb{N}\}$ extends from $-1$ to infinity.  The result then follows immediately by Lemma \ref{Density}.

\begin{figure}
\centering{
\begin{overpic}[scale=0.6]{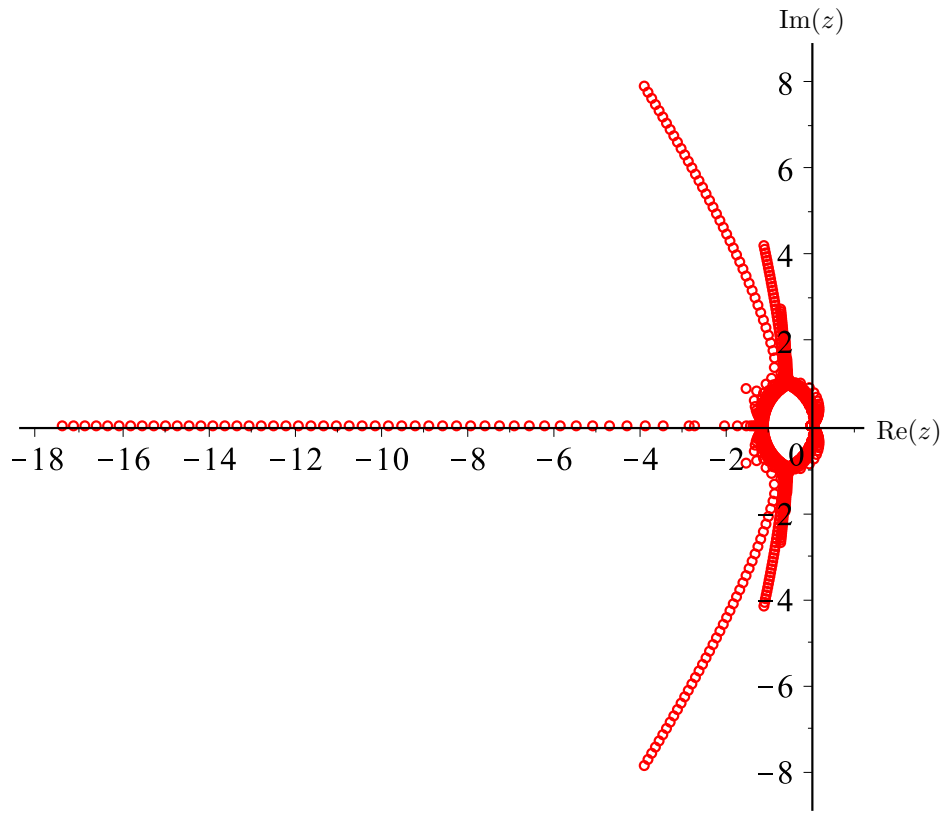}
\put(100,48.5){\footnotesize $\Re(z)$}
\put(89,95){\footnotesize $\Im(z)$}
\end{overpic}}
\caption{The connected set roots of $P_n+v$ for $n\in\{1,\hdots,50\}.$}
\label{PathJoinvRoots}
\end{figure}

\begin{theorem}\label{FinalDensity}
The collection of all connected set roots is dense in the entire complex plane, even if we restrict to connected graphs.
\end{theorem}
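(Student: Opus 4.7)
The plan is to apply the Beraha-Kahane-Weiss Theorem to the family $\{\con(P_n+v;x) : n \in \mathbb{N}\}$, produce an explicit curve of limits of roots that passes through $z=-1$ and extends to infinity, and then invoke Lemma \ref{Density} to transport this into density of the full collection of connected set roots.

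First, combining Lemma \ref{JoinLemma} with the closed formula for $\con(P_n;x)$ derived above,
\[
\con(P_n+v;x) = \frac{x^{n+2}-x^2}{(x-1)^2} - \frac{nx}{x-1} + x(x+1)^n.
\]
Since $\con(G;1)$ counts the connected sets of $G$ and is therefore positive, $x=1$ is never a root, so we may clear denominators and work with $g_n(x) := (x-1)^2\,\con(P_n+v;x)$. Expanding,
\[
g_n(x) = x^2 \cdot x^n + (-x^2)\cdot 1^n + n\bigl(-x(x-1)\bigr)\cdot 1^{n-1} + x(x-1)^2\cdot(x+1)^n,
\]
which is in Beraha-Kahane-Weiss form with characteristic roots $\lambda_1(x)=x$, $\lambda_2(x)=1$ (with multiplicity two, accounting for the $n$-linear term), and $\lambda_3(x)=x+1$. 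None of these three polynomials is a unit-modulus constant multiple of another, and none of the four coefficient polynomials vanishes identically, so the hypothesis of Theorem \ref{Beraha} is in force.

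Next I would locate the limit curves using Theorem \ref{Beraha}(i), which requires two of $|z|,\,1,\,|z+1|$ to be equal and no less than the third. Two relevant sets arise: the arc $A = \{z : |z|=1,\ \Re(z)\leq -\tfrac{1}{2}\}$ of the unit circle, passing through $z=-1$ (where $|z+1|=0$) and meeting the points $e^{\pm 2\pi i/3}$ (where $|z+1|=1$); and the two rays $B = \{-\tfrac{1}{2}+it : |t|\geq \tfrac{\sqrt{3}}{2}\}$ on the line $\Re(z)=-\tfrac{1}{2}$, where $|z|=|z+1|$ and both exceed $1$, starting at $e^{\pm 2\pi i/3}$ and running to infinity. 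Every point of $A\cup B$ is a limit of roots, and as $z$ traces this union, $|z+1|$ varies continuously from $0$ up through $1$ and out to $\infty$, thereby realizing every positive real number. Approximating any such $z$ by actual connected set roots of $P_n+v$ for large $n$ satisfies the hypothesis of Lemma \ref{Density}, yielding density in $\mathbb{C}$; the restriction to connected graphs is preserved because $P_n+v$ is connected and $G\lexi K_m$ is connected whenever $G$ is.

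The main obstacle will be the bookkeeping in the Beraha-Kahane-Weiss setup with the repeated characteristic root $\lambda=1$, namely rewriting the rational expression for $g_n(x)$ as a sum of the correct form and checking the modified non-degeneracy conditions carefully; tracing $|z+1|$ over $[0,\infty)$ along $A\cup B$ is then a routine continuity argument.
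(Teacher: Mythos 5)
Your proposal follows the paper's proof essentially verbatim: both apply the Beraha--Kahane--Weiss Theorem to the family $\{\con(P_n+v;x)\}$ after clearing denominators by $(x-1)^2$, identify the characteristic roots $x$, $1$ (with an $n$-linear term from the double root at $\lambda=1$), and $x+1$, extract the resulting limit curve, and then invoke Lemma~\ref{Density}. The only cosmetic difference is that you describe two of the paper's three pieces of the limiting curve (the arc $|z|=1$, $\Re(z)\le -\tfrac12$ and the rays $\Re(z)=-\tfrac12$, $|z|\ge 1$), omitting the arc $|z+1|=1$, $\Re(z)\ge -\tfrac12$; since those two pieces already realize every value of $|z+1|$ in $[0,\infty)$, the argument goes through unchanged.
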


\begin{proof}
It suffices to show that the supposition of Lemma \ref{Density} is true.  We do so by proving that the limits of roots of the family $\{\con(P_n+v;x)\colon\ n\geq 1\}$ include the points on the line $\Re(z)=-\tfrac{1}{2}$ of modulus at least one, the points on the circle $|z+1|=1$ with $\Re(z)\geq -\tfrac{1}{2},$ and the points on the circle $|z|=1$ with $\Re(z)\leq -\tfrac{1}{2}.$  See Figure \ref{LimitCurve} for an illustration of the limiting curve, which clearly extends from $-1$ to $\infty.$  For $n\geq 1,$ the connected set polynomial of $P_n+v$ is given by
\begin{align*}
\con(P_n+v;x)&=\con(P_n;x)+x(x+1)^n\\
&=\frac{x^{n+2}-x^2}{(x-1)^2}-\frac{nx}{x-1}+x(x+1)^n.
\end{align*}
Consider the polynomial $f_n(x)=(x-1)^2\con(P_n+v;x).$  We multiply by $(x-1)^2$ to clear the denominators of the rational terms and this adds only a simple root at $x=1.$  We rewrite $f_n(x)$ as follows:
\begin{align*}
f_n(x)&=x^{n+2}-x^2-nx(x-1)+(x-1)^2x(x+1)^n\\
&=x(x-1)^2(x+1)^n+x^2\cdot x^n-x^2-nx(x-1)\\
&=\alpha_1\lambda_1^n+\alpha_{2}\lambda_2^n+\alpha_{3_1}\lambda_3^n+n\alpha_{3_2}\lambda_3^{n-1},
\end{align*}
where
\[
\alpha_1=x(x-1)^2,\ \alpha_{2}=x^2,\ \alpha_{3_1}=-x^2, \mbox{ and } \alpha_{3_2}=-x(x-1);
\]
and
\[
\lambda_1=x+1,\ \lambda_2=x, \mbox{ and } \lambda_3=1.
\]
Clearly no $\alpha_i$ is identically zero and no $\lambda_i=\omega\lambda_j$ for $i\neq j$ and some complex number $\omega$ of unit modulus, so the nondegeneracy conditions of Theorem \ref{Beraha} are satisfied.  Applying part (i) of Theorem \ref{Beraha} involves three cases.  Figure \ref{LimitCurve} is provided to aid the reader in seeing the characterizations below.

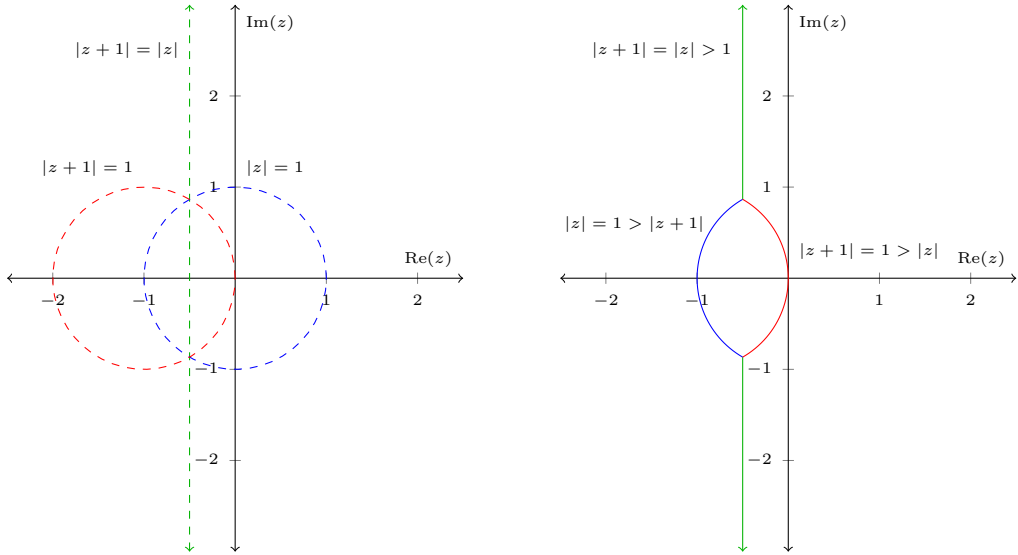
\begin{figure}
\begin{center}
\begin{tikzpicture}
\begin{axis}[
    scale only axis,
    axis equal image,
    axis lines=middle,
    x axis line style={<->},
    y axis line style={<->},
    xlabel={\tiny $\Re(z)$},
    ylabel={\tiny $\Im(z)$},
    ytick={-2,-1,1,2},
    yticklabels={\tiny $-2$,\tiny $-1$,\tiny $1$,\tiny $2$},
    xtick={-2,-1,1,2},
    xticklabels={\tiny $-2$,\tiny $-1$,\tiny $1$,\tiny $2$},
    ymin=-3,
    ymax=3,
    xmin=-2.5,
    xmax=2.5,
    samples=50
]
     \draw[dashed,green!70!black,style=<->] (axis cs:-0.5,-3)--(axis cs:-0.5,3);
     \addplot [dashed,domain=0:2*pi,samples=50,blue]({cos(deg(x))},{sin(deg(x))});
     \addplot [dashed,domain=0:2*pi,samples=50,red]({cos(deg(x))-1},{sin(deg(x))});
     \node[left] at (axis cs:-0.5,2.5) {\tiny $|z+1|=|z|$};
     \node[above left] at (axis cs:-1,1) {\tiny $|z+1|=1$};
     \node[above right] at (axis cs:0,1) {\tiny $|z|=1$};
\end{axis}
\end{tikzpicture}
\hspace{1cm}
\begin{tikzpicture}
\begin{axis}[
    scale only axis,
    axis equal image,
    axis lines=middle,
    x axis line style={<->},
    y axis line style={<->},
    xlabel={\tiny $\Re(z)$},
    ylabel={\tiny $\Im(z)$},
    ytick={-2,-1,1,2},
    yticklabels={\tiny $-2$,\tiny $-1$,\tiny $1$,\tiny $2$},
    xtick={-2,-1,1,2},
    xticklabels={\tiny $-2$, \tiny $-1$,\tiny $1$,\tiny $2$},
    ymin=-3,
    ymax=3,
    xmin=-2.5,
    xmax=2.5,
    samples=50
]
     \draw[green!70!black,style=->] (axis cs:-0.5,0.86602540378)--(axis cs:-0.5,3);
     \draw[green!70!black,style=<-] (axis cs:-0.5,-3)--(axis cs:-0.5,-0.86602540378);
     \addplot [domain=2*pi/3:4*pi/3,samples=50,blue]({cos(deg(x))},{sin(deg(x))});
     \addplot [domain=-pi/3:pi/3,samples=50,red]({cos(deg(x))-1},{sin(deg(x))});
     \node[left] at (axis cs:-0.5,2.5) {\tiny $|z+1|=|z|>1$};
     \node[left] at (axis cs:-0.8,0.6) {\tiny $|z|=1>|z+1|$};
     \node[right] at (axis cs:0,0.3) {\tiny $|z+1|=1>|z|$};
\end{axis}
\end{tikzpicture}
\end{center}
\caption[The limiting curve for the connected set roots of $P_n+v.$]{The curves $|z+1|=|z|,$ $|z+1|=1,$ and $|z|=1$ (left), and the limiting curve for the connected set roots of $P_n+v$ (right).}
\label{LimitCurve}
\end{figure}

\noindent
Case (i): $|\lambda_1|=|\lambda_2|\geq |\lambda_3|$

The condition $|z+1|=|z|$ is true if and only if $z$ is equidistant from $-1$ and $0$; that is, if and only if $\Re(z)=-\tfrac{1}{2}.$  Further, when $|z+1|=|z|$ we have $|z+1|\geq1$ and $|z|\geq 1$ if and only  if $z$ has modulus at least one.  Hence, we have $|z+1|=|z|\geq 1$ if and only if $z$ lies on the line $\Re(z)=-\tfrac{1}{2}$ and $z$ has modulus at least one.

\noindent
Case (ii): $|\lambda_1|=|\lambda_3|\geq |\lambda_2|$

The condition $|z+1|=1$ is true if and only if $z$ lies on the circle of radius $1$ centred at the point $-1.$  Further, when $|z+1|=1$ we have $|z+1|\geq |z|$ and $1\geq |z|$ if and only if $\Re(z)\geq -\tfrac{1}{2}.$  Hence, we have $|z+1|=1\geq |z|$ if and only if $z$ lies on the circle of radius $1$ centred at $-1$ and $\Re(z)\geq -\tfrac{1}{2}.$

\noindent
Case (iii): $|\lambda_2|=|\lambda_3|\geq |\lambda_1|$

The condition $|z|=1$ is true if and only if $z$ lies on the circle of radius $1$ centred at the point $0.$  Further, when $|z|=1$ we have $|z|\geq |z+1|$ and $1\geq |z+1|$ if and only if $\Re(z)\leq -\tfrac{1}{2}.$  Hence, we have $|z|=1\geq |z+1|$ if and only if $z$ lies on the circle of radius $1$ centred at $0$ and $\Re(z)\leq -\tfrac{1}{2}.$

Since the limiting curve of the connected set roots of the graphs $P_n+v$ extends continuously from the point $-1$ to infinity and we can find a connected set root arbitrarily close to any point on this curve, the supposition of Lemma \ref{Density} is satisfied.  We conclude that the collection of connected set roots is dense in the entire complex plane.  The result holds even if we restrict to connected graphs as we have only used the connected set roots of the connected graphs $(P_n+v)\lexi K_m$ for $n,m\in\mathbb{N}.$
\end{proof}

Finally, using Theorem \ref{FinalDensity} we can prove Theorem \ref{NodeRootsDense}, which states that the collection of all node reliability roots is dense in the complex plane.  This is potentially another significant difference between node reliability and all-terminal reliability.  While all-terminal reliability roots were shown to be dense in the disk $|z-1|\leq 1$ in \cite{BCConjecture}, the largest known distance of an all-terminal reliability root outside of this disk is $0.1134860896$ (see \cite{MolATRRoots}).  It seems as though all-terminal reliability roots are bounded in modulus by some constant, which would make them far from dense in the complex plane.  If this is indeed the case then we have found another striking difference between all-terminal reliability and node reliability.

\begin{proof}[Proof of Theorem \ref{NodeRootsDense}]
Let $z\in\mathbb{C}$ and let $\varepsilon>0.$  We will find a complex number $\tilde{z}$ and a graph $G$ such that $|z-\tilde{z}|<\varepsilon$ and $\nrel(G;\tilde{z})=0.$  Recall that 
\[
\con(G;x)=(1+x)^n\cdot\nrel\left(G;\tfrac{x}{1+x}\right),
\]
so that any connected set root $x\neq -1$ of the graph $G$ yields a node reliability root $\tfrac{x}{1+x}$ of $G.$  The complex function $f(x)=\tfrac{x}{1+x}$ is a M\"obius transformation and hence it is one-to-one and continuous on its domain.  Let $x=f^{-1}(z)=\frac{z}{1-z}$ (we may assume that $z\neq 1$).  Since $f$ is continuous, we can find $\delta>0$ such that $|x-\tilde{x}|<\delta$ implies $|f(x)-f(\tilde{x})|=|z-f(\tilde{x})|<\varepsilon.$  By Theorem \ref{FinalDensity}, the connected set roots of connected graphs are dense in the complex plane, and hence there is some connected graph $G$ with connected set root $\tilde{x}\neq -1$ satisfying $|x-\tilde{x}|<\delta.$  The complex number $f(\tilde{x})=\frac{\tilde{x}}{1+\tilde{x}}$ satisfies $|z-f(\tilde{x})|<\varepsilon$ and $\nrel(G;f(\tilde{x}))=0.$
\end{proof}
 
\section{Conclusion and open problems}

Note that while all-terminal reliability is identically $0$ for any disconnected graph, node reliability and the connected set polynomial are nonzero for disconnected graphs, and are in fact quite interesting.  The connected set polynomial of a disconnected graph is given simply by the sum of the connected set polynomials of its components, and the node reliability of a disconnected graph can also be written in terms of the node reliabilities of its components.  Let $G$ be a graph of order $n$ with connected components $G_1,\hdots,G_k$ of orders $n_1,\hdots,n_k,$ respectively.  The connected set polynomial of $G$ is given by
\[
\con(G;x)=\sum_{i=1}^k\con(G_i;x),
\]
and the node reliability of $G$ is given by
\[
\nrel(G;p)=\sum_{i=1}^k (1-p)^{n-n_i}\nrel(G_i;p).
\]

Lemma \ref{NonrealConnectedSetRoot} implies that the connected set polynomial of a \textit{connected} graph has all real roots if and only if it has degree at most $2,$ and the same result for node reliability follows readily in Theorem \ref{NonrealNodeRoot}.  However, we have found \textit{disconnected} graphs whose connected set polynomials have degree $3$ and still have all real roots.  Consider the graph $K_3\cup kK_2,$ i.e.\ the disjoint union of $K_3$ and $k$ copies of $K_2.$  We have
\begin{align*}
\con(K_3\cup kK_2;x)&=\con(K_3;x)+k\con(K_2;x)\\
&=x^3+3x^2+3x+k(x^2+2x)\\
&=x^3+(3+k)x^2+(3+2k)x\\
&=x(x^2+(3+k)x+(3+2k))
\end{align*}
The discriminant of the quadratic factor in the above expression factors to
\[
(k-3)(k+1).
\]
Therefore, $\con(K_3\cup kK_2;x)$ has all real roots for $k\geq 3.$  One can show similarly that $\con(P_3\cup kK_2;x)$ has all real roots for $k\geq 6.$  All node reliability roots of these graphs will be real as well.  Are there disconnected graphs whose connected set polynomials (and node reliabilities) have degree greater than $3$ and still have all real roots?  What about arbitrarily high degree?

We turn now to the closure of the collection of real node reliability roots. Disconnected graphs will allow us to show that the collection of real node reliability roots is dense in the interval $[1,2]$.  We remark that from \cite{BCConjecture} it is known that the closure of the collection of real roots of (nonzero) all-terminal reliability polynomials is exactly the set $\{0\} \cup [1,2]$.  

\begin{theorem}\label{RealDense}
The closure of the collection of real node reliability roots contains the interval $[1,2]$.
\end{theorem}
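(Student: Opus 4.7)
The plan is to prove the stronger statement that the real node reliability roots are dense in $[1,2]$. For each rational $p^{\ast} \in [1,2]$ I would exhibit a disconnected graph whose node reliability polynomial vanishes at $p^{\ast}$; density of the rationals in $[1,2]$ then yields the theorem.

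First I would consider the family $G_{a,b} = aK_{1} \cup bK_{2}$, the disjoint union of $a$ isolated nodes and $b$ copies of $K_{2}$, for nonnegative integers $a,b$ not both zero. Using $\nrel(K_{1};p) = p$, $\nrel(K_{2};p) = p(2-p)$, and the formula for the node reliability of a disconnected graph stated earlier in this section, a direct calculation gives
\[
\nrel(G_{a,b}; p) = a(1-p)^{a+2b-1}\,p + b(1-p)^{a+2b-2}\,p(2-p),
\]
which factors (for $a + 2b \geq 2$) as
\[
\nrel(G_{a,b}; p) = (1-p)^{a+2b-2}\,p\,\bigl[(a+2b) - (a+b)p\bigr].
\]
The linear factor produces the real root $p^{\ast} = (a+2b)/(a+b)$.

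It then remains to check that every rational in $[1,2]$ arises in this way. Given $q/r \in [1,2]$ with positive integers $r \leq q \leq 2r$, setting $a = 2r - q$ and $b = q - r$ yields nonnegative integers with $a+b = r$ and $(a+2b)/(a+b) = q/r$; by scaling $q$ and $r$ by a factor of $2$ if necessary, we may arrange $q \geq 2$ so that $a+2b \geq 2$ and the factored form above applies. Since the rationals are dense in $[1,2]$, the set of real node reliability roots is dense in $[1,2]$, and the theorem follows. I do not expect any real obstacle here; the only content lies in recognizing that disjoint unions of isolated vertices and single edges form a family flexible enough to realize every rational of the prescribed shape as an explicit node reliability root.
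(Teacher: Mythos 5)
Your proof is correct and is essentially the same argument as the paper's: both take disjoint unions of isolated vertices and single edges, compute the node reliability via the formula for disconnected graphs, and read off the rational root from the resulting linear factor. The only difference is cosmetic --- you parameterize by the component counts $a$ (copies of $K_1$) and $b$ (copies of $K_2$) and obtain the root $(a+2b)/(a+b)$, while the paper parameterizes directly by the desired root $a/b$ and builds $(a-b)K_2 \cup (2b-a)K_1$; these are the same family of graphs under a change of variables.
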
  

\begin{proof}
Let $a,b\in\mathbb{N}$ with $b\leq a\leq 2b.$  It suffices to show that $\tfrac{a}{b}$ is a node reliability root of some graph.  Let
\[
G_{a,b}=(a-b)K_2\cup (2b-a)K_1,
\] 
that is, the disjoint union of $a-b$ copies of $K_2$ and $2b-a$ copies of $K_1.$  Since $G_{a,b}$ has $a-b$ edges and $2(a-b)+2b-a=a$ nodes, and no connected sets of order $3$ or more, its node reliability is given by
\begin{align*}
\nrel(G_{a,b};p)&=(a-b)p^2(1-p)^{a-2}+ap(1-p)^{a-1}\\
&=p(1-p)^{a-2}\left[(a-b)p+a(1-p)\right]\\
&=p(1-p)^{a-2}\left[a-bp\right].
\end{align*}
Therefore, the graph $G_{a,b}$ has node reliability root $\tfrac{a}{b},$ and we are done.
\end{proof}

The previous result begs the question: is $[1,2]$ in the closure of the collection of {\it real} node reliability roots of connected graphs?  What is the closure of the collection of real node reliability roots (for connected or disconnected graphs)?

Finally, we can also show that the node reliability roots of disconnected graphs are dense in the complex plane, which is surprising as asymptotically almost all graphs of order $n$ are connected.  The proof (details of which are omitted) is very similar to that of Theorem \ref{FinalDensity}, but we use the disconnected graphs $P_n\cup K_n$ in place of the connected graphs $P_n+v.$ 
 
\section*{Acknowledgements}
 
Research of Jason I.\ Brown is partially supported by grant RGPIN 170450-2013 from Natural Sciences and Engineering Research Council of Canada (NSERC).  Research of Lucas Mol is partially supported by an Alexander Graham Bell Canada Graduate Scholarship from NSERC.

\singlespacing

\bibliographystyle{plain}
\bibliography{2016ThesisBib}

\end{document}